\newtheorem{theorem}{Theorem}
\newtheorem*{theorem*}{Theorem}
\newtheorem{observation}{Observation}
\newtheorem{corollary}{Corollary}
\newtheorem{lemma}{Lemma}
\theoremstyle{remark}
\theoremstyle{definition}
\newcommand{\E}{\mathcal{E}}
\newcommand{\T}{\mathcal{T}}
\newcommand{\C}{\mathcal{C}}
\newcommand{\F}{\mathcal{F}}
\title{Poncelet Propellers:\\Invariant Total Blade Area}
\author{Dominique Laurain} 
\author{Daniel Jaud}
\author{Dan Reznik}
\date{January, 2021}
\begin{document}

\maketitle


\begin{abstract}
Given a triangle, a trio of circumellipses can be defined, each centered on an excenter. Over the family of Poncelet 3-periodics (triangles) in a concentric ellipse pair (axis-aligned or not), the trio resembles a rotating propeller, where each ``blade'' has variable area. Amazingly, their total area is invariant, even when the ellipse pair is not axis-aligned. We also prove a closely-related invariant involving the sum of blade-to-excircle area ratios.
\end{abstract}

\section{Introduction}
\label{sec:intro}
Invariants of Poncelet N-periodics in various ellipse pairs have been a recent focus of research \cite{reznik2020-intelligencer,garcia2020-new-properties,garcia2020-relating}. For the confocal pair alone (elliptic billiard), 80+ invariants have been catalogued \cite{reznik2020-invariants}, and several proofs have ensued  \cite{akopyan2020-invariants,bialy2020-invariants,caliz2020-area-product}.

We start by describing an invariant manifested by Poncelet 3-periodics (triangles) inscribed in an ellipse and circumscribed about a concentric circle; see Figure~\ref{fig:poncelet-incircle}. For such a pair to admit a 3-periodic family, its axes are constrained in a simple way, explained below.

\begin{figure}
    \centering
    \includegraphics[width=.7\textwidth]{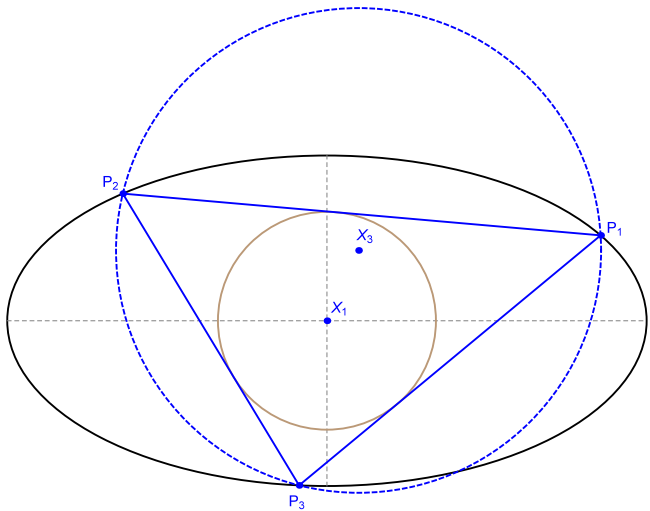}
    \caption{A Poncelet 3-periodic (blue) inscribed in an outer ellipse (black) and circumscribed about an inner concentric circle (brown). Over the family, the circumrcircle (dashed blue) has constant circumradius $R=(a+b)/2$ \cite[Thm 1]{garcia2020-relating}. \href{https://youtu.be/eIxb1so6ORo}{Video}, \href{https://bit.ly/3oF1ujm}{app}}
    \label{fig:poncelet-incircle}
\end{figure}

By definition, the incenter $X_1$ of this family is stationary at the common center and the inradius $r$ is fixed. Remarkably, the circumradius $R$ is also invariant, and this implies the sum of cosines of its internal angles is as well \cite{garcia2020-relating}.

Referring to Figure~\ref{fig:single-circum}, consider a circumellipse \cite[Circumellipse]{mw} of a 3-periodic in the incircle pair centered on one of the excenters, i.e., a vertex of the excentral triangle \cite[Excentral Triangle]{mw}. 

\begin{figure}
    \centering
    \includegraphics[width=.9\textwidth]{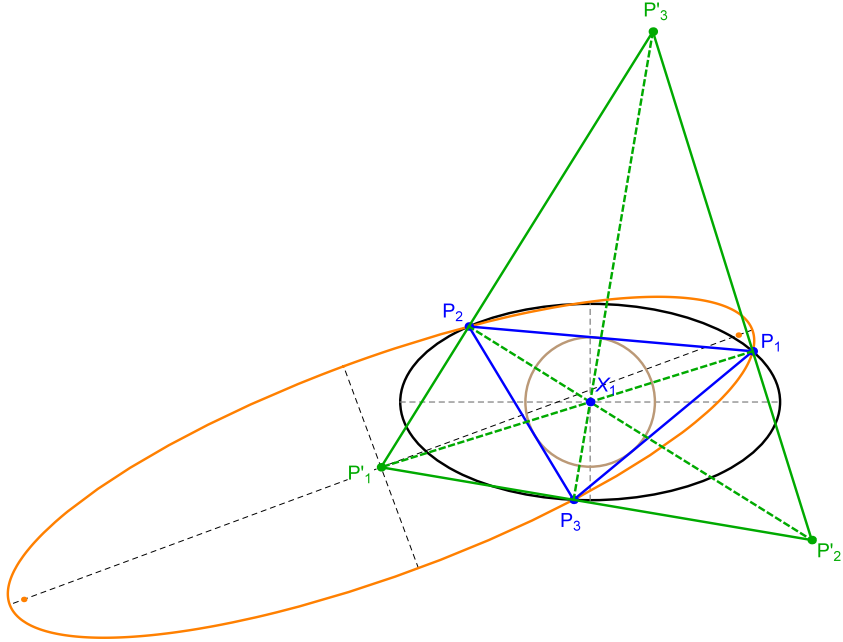}
    \caption{A Poncelet 3-periodic (blue) is shown interscribed between an outer ellipse (black) and an inner concentric circle (brown). By definition, the incenter $X_1$ is stationary at the common center. The excentral triangle (green) is the anti-cevian with respect to $X_1$. A circumellipse (orange) centered on an excenter $P_1'$ is shown. \href{https://youtu.be/JUCmAMsfdkI}{Video}}
    \label{fig:single-circum}
\end{figure}

\subsection*{Main Result} Referring to Figure~\ref{fig:three-circum}, over 3-periodics in the concentric ellipse pair with an incircle, the total area of the three excenter-centered circumellipses (blades of a ``Poncelet propeller'') is invariant.

We then extend this property to a 3-periodic family interscribed in any pair of concentric ellipses, including non-axis-aligned; see Figure~\ref{fig:non-axis-aligned}. The only proviso is that the circumellipses be centered on the vertices of the {\em anticevian triangle} with respect to the common center \cite[Anticevian Triangle]{mw}.

\begin{figure}
    \centering
    \includegraphics[width=.9\textwidth]{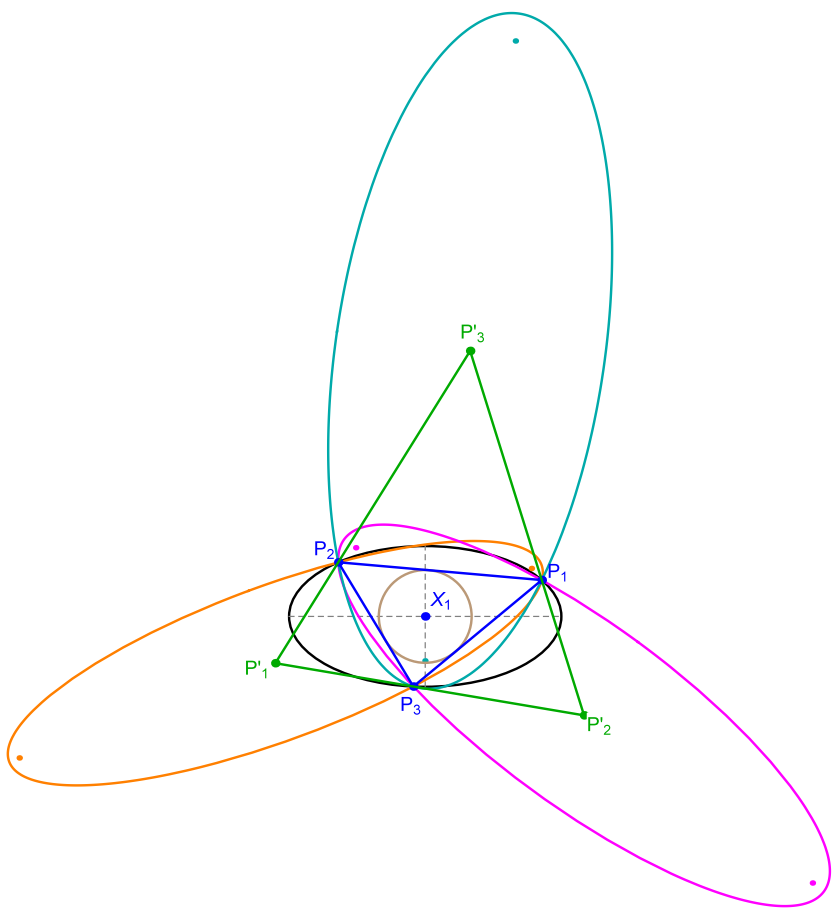}
    \caption{The three excenter-centered circumellipses (propeller blades) are shown (orange, pink, light blue) centered on the 3 excenters $P_i'$, $i=1,2,3$. Our main result is that over the 3-periodic Poncelet ``incircle'' family, the sum of their areas is invariant. \href{https://youtu.be/ub4wAv8Hgb0}{Video}}
    \label{fig:three-circum}
\end{figure}

\subsection*{Structure of Article} In Section~\ref{sec:anticevian} we derive the areas for the 3 circumellipses centered on the vertices of the anticevian triangle with respect to some point $X$. In Section~\ref{sec:incircle} we apply those formulas for 3-periodics in a pair with incircle. In Section~\ref{sec:general} we generalize the result to an unaligned concentric ellipse pair. In Section~\ref{sec:excircles} we prove a related invariant contributed by L. Gheorghe \cite{gheorghe2020-excircles} involving area ratios of circumellipses and excircles. We encourage the reader to watch some of the videos mentioned herein listed in Section~\ref{sec:videos}.

%

\section{Areas of Anticevian Circumellipses}
\label{sec:anticevian}
Let $\T=P_i$, $i=1,2,3$ be a reference triangle. Let $\C_X$ denote the circumellipse \cite[Circumellipse]{mw} centered on point $X$ interior to $\T$ and let $\Delta_X$ denote its area. Let the trilinear 
coordinates of $X$ be $[u,v,w]$ \cite[Trilinear Coordinates]{mw}. Let $\T_X$ denote the anticevian triangle of $\T$ wrt $X$, and $P_i'$ its vertices \cite[Anticevian Triangle]{mw}. Let $\C_i'$ denote the three circumellipses centered on $P_i'$. Let $\Delta_i$ denote their areas, respectively.

\begin{lemma}
The areas of the aforementioned circumellipses are given by:
\begin{align}
    \Delta_X & = \frac{z s_1 s_2 s_3 u v w}{s_1 u + s_2 v + s_3 w} \label{eq:delta-o} \\
    \Delta_1 & = \frac{z s_1 s_2 s_3 u v w }{-s_1 u + s_2 v + s_3 w} \nonumber\\
    \Delta_2 & = \frac{z s_1 s_2 s_3 u v w }{s_1 u - s_2 v + s_3 w} \nonumber\\
    \Delta_3 & = \frac{z s_1 s_2 s_3 u v w }{s_1 u + s_2 v - s_3 w} \nonumber
\end{align} 

{\small
\[  z =  \pi \sqrt{  \frac{(s_1 + s_2 + s_3) (-s_1 + s_2 + s_3) (s_1 - s_2 + s_3) (s_1 + s_2 - s_3)}{(s_1 u + s_2 v + s_3 w) (-s_1 u + s_2 v + s_3 w)(s_1 u - s_2 v + s_3 w)(s_1 u + s_2 v - s_3 w) }} \]
}

\end{lemma}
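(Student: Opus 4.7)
The plan is to prove a single master area formula for the circumconic centered at any interior point $Y$ with trilinears $[p:q:r]$, and then obtain all four identities as substitutions; the elegance of the statement is that the anticevian vertices $P_i'$ differ from $X$ only by a sign flip of one trilinear coordinate, so a single calculation suffices.

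First I would set up circumconics in trilinears. The three vanishing conditions at the vertices of $\mathcal{T}$ force every circumconic to have the form $l\,\beta\gamma+m\,\gamma\alpha+n\,\alpha\beta=0$ for some $[l:m:n]$, the so-called perspector. Requiring that the center of this conic (the pole of the line at infinity) equal $[p:q:r]$ gives two further linear conditions, which determine $[l:m:n]$ uniquely up to scale as an explicit linear combination of $[p:q:r]$ and the sides $s_1,s_2,s_3$. Feeding these coefficients into the standard determinantal formula for the area of a conic (expressed via the matrix of the quadratic form, the minor at the center, and the reference area $|\mathcal{T}|$), and recognising the constant $16|\mathcal{T}|^2=(s_1+s_2+s_3)(-s_1+s_2+s_3)(s_1-s_2+s_3)(s_1+s_2-s_3)$ via Heron, yields
\[
\Delta(Y)=\frac{\pi\, s_1 s_2 s_3\,pqr}{s_1p+s_2q+s_3r}\sqrt{\frac{16|\mathcal{T}|^2}{(s_1p+s_2q+s_3r)(-s_1p+s_2q+s_3r)(s_1p-s_2q+s_3r)(s_1p+s_2q-s_3r)}}.
\]
Setting $[p:q:r]=[u:v:w]$ recovers $\Delta_X$ with the stated $z$.

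For $\Delta_1,\Delta_2,\Delta_3$ I would substitute the trilinears of the anticevian vertices, $P_1'=[-u:v:w]$, $P_2'=[u:-v:w]$, $P_3'=[u:v:-w]$, into the master formula. The key observation is that a single sign flip of one of $u,v,w$ permutes the four factors $D,D_1,D_2,D_3$ appearing under the radical (swapping $D\leftrightarrow D_i$ while negating the remaining two), so the product $D\,D_1\,D_2\,D_3$, and hence the common radical $z$, is invariant. Only the prefactor $pqr$ and the outer denominator change; passing to absolute values (areas are positive) reproduces the three displayed formulas, with the sign in the denominator of each $\Delta_i$ encoding which coordinate was flipped.

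The main obstacle is the master derivation itself: inverting the perspector$\to$center map and plugging into the determinantal area formula produces an \emph{a priori} bulky rational function, whose collapse to the clean form above hinges on the Heron identity and is most cleanly verified with a symbolic computation. Care is also needed to confirm that the conic really is an ellipse (rather than a hyperbola or degenerate) for the point configurations under consideration, so that the signed formula coincides with the geometric area up to the absolute value already absorbed into the choice of positive square root. Once the master formula is established, the anticevian specialisation is an immediate corollary of the sign-flip symmetry.
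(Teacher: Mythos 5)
Your proposal is correct and follows essentially the same route as the paper: the paper's proof simply cites the known formula for the area of a circumellipse centered at a trilinear point $[u:v:w]$ and substitutes the anticevian trilinears $[-u:v:w]$, $[u:-v:w]$, $[u:v:-w]$, exactly as you do. You go further by sketching a derivation of that master formula and by making explicit the sign-flip symmetry under which the product of the four factors $(\pm s_1u \pm s_2v \pm s_3w)$ --- and hence $z$ --- is invariant; both are welcome additions, since the paper leaves them to a citation.
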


\begin{proof}
Let the trilinears of $X$ be $[u:v:w]$. The trilinears of $P_i'$ are known to be $[-u:v:w]$, $[u:-v:w]$ and $[u:v:-w]$, see \cite[Anticevian Triangle]{mw}. The expressions are obtained from the formula of the area of a circumellipse centered on $X=[u:v:w]$ \cite[Circumellipse]{mw}.
\begin{align*}
    \Delta_X = \frac{z s_1 s_2 s_3 u v w}{s_1 u + s_2 v + s_3 w}
\end{align*}

\end{proof}

The above also implies:

\begin{corollary}
For any triangle the following relation holds:
\begin{equation*}
    \frac{1}{\Delta_o} =  \frac{1}{\Delta_1} + \frac{1}{\Delta_2} + \frac{1}{\Delta_3} 
\end{equation*}
\end{corollary}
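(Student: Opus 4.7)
The plan is direct: the corollary is a pure algebraic identity extracted from the four formulas in the lemma, so no geometry is needed beyond what is already in place. First I would invert each of the four area expressions. Writing $K = z\, s_1 s_2 s_3\, u v w$, the lemma gives
\[
\frac{1}{\Delta_X} = \frac{s_1 u + s_2 v + s_3 w}{K},\qquad
\frac{1}{\Delta_i} = \frac{L_i}{K},
\]
where $L_1 = -s_1 u + s_2 v + s_3 w$, $L_2 = s_1 u - s_2 v + s_3 w$, $L_3 = s_1 u + s_2 v - s_3 w$.

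Second, I would verify that the prefactor $K$ is genuinely the same for all four areas. This is the one point worth checking, since $z$ is defined through a product in its square-root denominator and the trilinears of the anticevian vertices differ by a sign from those of $X$. Replacing $u \mapsto -u$ (or $v \mapsto -v$, $w \mapsto -w$) in the product $(s_1 u + s_2 v + s_3 w)(-s_1 u + s_2 v + s_3 w)(s_1 u - s_2 v + s_3 w)(s_1 u + s_2 v - s_3 w)$ just permutes the four factors up to an even number of sign flips, so the product is invariant and $z$ is unchanged. The Heron-type numerator inside $z$ depends only on $s_1, s_2, s_3$, so it is trivially unchanged. Thus the common $K$ really is common.

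Third, I would add the three reciprocals:
\[
\frac{1}{\Delta_1} + \frac{1}{\Delta_2} + \frac{1}{\Delta_3} = \frac{L_1 + L_2 + L_3}{K}.
\]
In $L_1 + L_2 + L_3$, the coefficient of $s_1 u$ is $-1 + 1 + 1 = 1$, and similarly for $s_2 v$ and $s_3 w$, giving $L_1 + L_2 + L_3 = s_1 u + s_2 v + s_3 w$. The right-hand side is therefore $1/\Delta_X = 1/\Delta_o$, which is exactly the claimed identity.

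I do not expect any obstacle: the only substantive step is the sign-invariance check for $z$, and even that is immediate once one writes the denominator as a product over the four linear forms. One minor caveat to flag is that the lemma's formulas for $\Delta_i$ treat the trilinears of $P_i'$ as $[-u:v:w]$ etc.\ without an overall sign change in the numerator, which is legitimate because areas are positive; this sign convention has to be respected consistently when summing, but it does not affect the final identity.
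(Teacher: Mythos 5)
Your proposal is correct and matches the paper's intent: the paper gives no explicit proof, simply asserting that the corollary follows from the Lemma, and your computation (common numerator $K = z\,s_1 s_2 s_3\, uvw$, denominators summing to $s_1 u + s_2 v + s_3 w$) is exactly the omitted verification. Your sign-invariance check for $z$ under $u \mapsto -u$ and the caveat about positivity of areas are reasonable extra care that the paper itself does not bother to spell out.
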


\noindent Note: this is reminiscent of the well-known property $1/r=\sum{1/r_i}$ where $r$ is the inradius and $r_i$ are the exradii of a triangle \cite[Excircles]{mw}. 

From direct calculations:

\begin{lemma}
Let $\Sigma_X$ denote the sum $\Delta_1 + \Delta_2 + \Delta_3$. This is given by:
\begin{equation}
 \Sigma_X = \frac{(s_1^2 u^2 + s_2^2 v^2 + s_3^2 w^2 - 2 s_1 s_2 u v - 2 s_1 s_3 u w - 2 s_2 s_3 v w)s_1 s_2 s_3 u v w z}{(s_1 u + s_2 v - s_3 w)(s_1 u - s_2 v + s_3 w)(s_1 u - s_2 v - s_3 w)}
\label{eq:sigma-123}
\end{equation}
\end{lemma}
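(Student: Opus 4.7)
My plan is to prove this identity by direct algebraic manipulation, starting from the closed-form expressions for $\Delta_1,\Delta_2,\Delta_3$ supplied by the previous lemma. Since all three formulas share the common numerator $z s_1 s_2 s_3 u v w$, I would factor it out immediately, reducing the problem to evaluating a sum of three reciprocal linear forms, namely
\[
\Sigma_X \;=\; z\, s_1 s_2 s_3\, u v w \cdot \left(\frac{1}{A} + \frac{1}{B} + \frac{1}{C}\right),
\]
where I abbreviate $A=-s_1 u+s_2 v+s_3 w$, $B=s_1 u-s_2 v+s_3 w$, $C=s_1 u+s_2 v-s_3 w$. Putting the three fractions over the common denominator $ABC$ converts the claim into two separate identities: one matching the numerator to the quadratic form $s_1^2 u^2+s_2^2 v^2+s_3^2 w^2-2(s_1 s_2 uv+s_1 s_3 uw+s_2 s_3 vw)$, and one matching $ABC$ to the displayed denominator.

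The key step is expanding $AB+BC+CA$. Each pairwise product has a convenient difference-of-squares structure because in any two of the three factors exactly one of the variables appears with opposite sign. For instance, writing $A$ and $B$ as $(s_3 w)\mp(s_1 u - s_2 v)$ gives $AB=(s_3 w)^2-(s_1 u-s_2 v)^2$, and the analogous identities hold for $BC$ and $CA$. Summing these three products, the squared terms collect into $-(s_1^2 u^2+s_2^2 v^2+s_3^2 w^2)$ while the cross terms contribute $+2(s_1 s_2 uv+s_1 s_3 uw+s_2 s_3 vw)$, which is exactly the negative of the quadratic form appearing in the lemma.

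It remains to reconcile the denominator. Observe that $-A=s_1 u-s_2 v-s_3 w$ is precisely the third factor written in the claim, while $B$ and $C$ already appear there unchanged, so the denominator in~\eqref{eq:sigma-123} equals $-ABC$. The sign introduced here cancels the sign produced by the numerator computation, and the identity follows after multiplying back in the common factor $z\, s_1 s_2 s_3\, u v w$.

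I do not anticipate a serious obstacle: the calculation is purely mechanical and the only real risk is a sign error, which is easy to audit by checking that the three pairwise products are each invariant under the corresponding sign flip. The bookkeeping is simplified by noting that $A,B,C$ together with $-A$ are exactly the four Heron-like factors built from $\pm s_1 u\pm s_2 v\pm s_3 w$ with an even number of minus signs, so the entire identity can be viewed as a two-variable analog of the factorization already used in the constant $z$.
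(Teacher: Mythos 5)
Your proposal is correct and takes essentially the same route as the paper, which offers no written proof beyond the phrase ``from direct calculations'': factoring out $z\,s_1 s_2 s_3\,uvw$, expanding $AB+BC+CA$ by difference of squares to get $-(s_1^2u^2+s_2^2v^2+s_3^2w^2-2s_1s_2uv-2s_1s_3uw-2s_2s_3vw)$, and noting that the displayed denominator is $-ABC$ is exactly that calculation carried out, and the signs do cancel as you claim. (Your closing aside is slightly off --- $A$, $B$, $C$ each contain exactly \emph{one} minus sign, so $\{A,B,C,-A\}$ is not the set of even-sign Heron factors --- but this plays no role in the argument.)
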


\section{Poncelet Family with Incircle}
\label{sec:incircle}
Let $\F$ denote the 1d Poncelet family of 3-periodics inscribed in an outer ellipse $\E$ with semi-axes $a,b$ and circumscribed about a concentric circle $\E'$ with radius $r$. Assume $a>b>r>0$. Let $O$ denote the common center.

Recall Cayley's condition for the existence of a 3-periodic family interscribed between two concentric, axis-aligned ellipses \cite{georgiev2012-poncelet}:

\[ \frac{a_c}{a}+\frac{b_c}{b} = 1 \]

\noindent where $a_c$ and $b_c$ are the major and minor semi-axes of the inscribed ellipse corresponding to the caustic of the Poncelet family. For $\F$, $a_c = b_c = r$, then $\frac{a}{r} + \frac{b}{r} = 1$, i.e.:

\[ r = \frac{ab}{a + b}. \]

By definition, the incenter $X_1$ of $\F$ triangles is stationary at $O$. Also by definition, the inradius $r$ is fixed.

Also known is the fact that the circumradius $R$ of $\F$ triangles is invariant and given by \cite{garcia2020-relating}:

\[ R = \frac{a b}{2 r} = \frac{a + b}{2}. \]

Let $\rho=\frac{r}{R}$ denote the ratio $r/R$. From the above it is invariant and given by:

\[ \rho = \frac{r}{R} =\frac{2 r^2}{a b}. \]

Taking $X=O$, note that the area $\Delta_o=\pi a b$ of $\E$ is by definition, invariant. Also note the anticevian $\T_o$ in this case is the excentral triangle \cite[Excentral Triangle]{mw}.

\begin{lemma}
Over $\F$, $\Sigma_o$ is invariant and given by:

\[ \Sigma_o = \left(1 + \frac{4}{\rho}\right) \Delta_o\]
\label{lem:incircle}
\end{lemma}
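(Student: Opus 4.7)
The plan is to specialize the general formula \eqref{eq:sigma-123} to the incircle family, where $X=O=X_1$, and then invoke two classical triangle identities to reduce everything to the invariants $r$ and $R$.

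First I would note that the trilinears of the incenter $X_1$ are $[1:1:1]$, so we may take $u=v=w=1$. With this choice $z=\pi$ (the ratio under the radical collapses), and the factors in the denominators of $\Delta_1,\Delta_2,\Delta_3$ become $-s_1+s_2+s_3=2(s-s_1)$, etc., where $s=(s_1+s_2+s_3)/2$ is the semiperimeter. Therefore
\[
\Sigma_o = \pi\, s_1 s_2 s_3\left[\frac{1}{2(s-s_1)}+\frac{1}{2(s-s_2)}+\frac{1}{2(s-s_3)}\right].
\]
Using $s-s_i = T/r_i$ (where $T$ is the triangle area and $r_i$ is the $i$-th exradius) this simplifies to
\[
\Sigma_o = \frac{\pi\, s_1 s_2 s_3\,(r_1+r_2+r_3)}{2T}.
\]

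Next I would apply the two standard identities $s_1 s_2 s_3 = 4RT$ (from $R=s_1 s_2 s_3/(4T)$) and $r_1+r_2+r_3 = 4R+r$ \cite[Excircles]{mw}, which reduce the expression to
\[
\Sigma_o \;=\; 2\pi R\,(4R+r) \;=\; 8\pi R^2 + 2\pi R r.
\]
This step is the crux: it trades the three side-length-dependent pieces for a combination depending only on $r$ and $R$.

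Finally I would rewrite this in terms of $\Delta_o$ and $\rho$. Since $\Delta_o=\pi a b = 2\pi r R$ and $\rho=r/R$, we have $2\pi R^2 = \Delta_o/\rho$, so
\[
\Sigma_o \;=\; \frac{4\,\Delta_o}{\rho} + \Delta_o \;=\; \left(1+\frac{4}{\rho}\right)\Delta_o,
\]
as claimed. Invariance over $\F$ is then immediate: $r$ is fixed by construction, $R=(a+b)/2$ is the invariant from \cite{garcia2020-relating} recalled above, hence $\rho$ and $\Delta_o=\pi ab$ are all constants of the family. There is no real obstacle here beyond having the right identities at hand; the slickness of the result comes entirely from the coincidence that the Euler-style identity $r_1+r_2+r_3=4R+r$ expresses the relevant sum in precisely the two quantities that happen to be Poncelet invariants.
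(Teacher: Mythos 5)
Your proof is correct and follows essentially the same route as the paper: specialize the anticevian circumellipse areas at $X_1=[1:1:1]$ (so $z=\pi$) and reduce the resulting symmetric function of the sidelengths to the Poncelet invariants $r$ and $R$. Where the paper leaves this reduction as ``direct calculations,'' you carry it out explicitly and cleanly via the classical identities $s-s_i=T/r_i$, $s_1s_2s_3=4RT$ and $r_1+r_2+r_3=4R+r$, arriving at the equivalent closed form $\Sigma_o=2\pi R(4R+r)$; all steps check out.
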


\begin{proof}
%
   





By using well-known formulas for $r$ and $R$ \cite[Inradius,Circumradius]{mw}, one can express $\rho$ in terms of the sidelengths:

\[ 
\rho = \frac{(s_1 + s_2 - s_3)(s_1 - s_2 + s_3)(-s_1 + s_2 + s_3)}{2 s_1 s_2 s_3}
\]

Using the trilinears for the incenter $O=X_1=[u,v,w]=[1:1:1]$ in \eqref{eq:sigma-123}, direct calculations yield the claim.
\end{proof}

\section{Generalizing the Result}
\label{sec:general}
It turns out Lemma~\ref{lem:incircle} can be generalized to a larger class of Poncelet 3-periodic families.

Let $(\E,\E')$ be a generic pair of concentric ellipses (axis-aligned or not), admitting a Poncelet 3-periodic family. Let $O$ be their common center. Let $C_i'$ be the circumellipses centered on the vertices of the anticevian with respect to $O$. Referring to Figure~\ref{fig:non-axis-aligned}:

\begin{theorem}
 Over the 3-periodic family interscribed in $(\E,\E')$, $\Sigma_{o}$ is invariant.
 \label{thm:main}
\end{theorem}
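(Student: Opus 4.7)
My plan is to reduce Theorem~\ref{thm:main} to the incircle case already established in Lemma~\ref{lem:incircle} by means of an affine change of coordinates that fixes the common center $O$. Specifically, I would choose a linear map $L$ (fixing $O$) that sends the inner ellipse $\E'$ to a circle $\hat\E' := L(\E')$; such an $L$ exists because any ellipse can be normalized to a circle by scaling along its principal axes. Writing $\hat\E := L(\E)$, the pair $(\hat\E,\hat\E')$ is a concentric ellipse--circle pair, and since affinities preserve tangency and incidence they also preserve the Poncelet 3-periodic condition. Hence the family $\hat\F$ carried over by $L$ is precisely an incircle family as in Section~\ref{sec:incircle}.

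The next step is to verify that every object appearing in $\Sigma_o$ transforms covariantly under $L$. Because $L$ is linear and fixes $O$, cevians from vertices through $O$ map to cevians, and the harmonic-conjugate construction defining the anticevian triangle is preserved; hence the anticevian vertices $P_i'$ of each 3-periodic $\T$ map to the anticevian vertices of the image triangle $L(\T)$ with respect to $O$. A conic with prescribed center passing through three prescribed points is uniquely determined (five conditions), so the circumellipse $\C_i'$ centered at $P_i'$ is sent by $L$ to the circumellipse of $L(\T)$ centered at $L(P_i')$. The analogous statement holds for the center-of-$\E$ circumellipse, which is simply $\E$ itself and is sent to $\hat\E$.

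The conclusion is then a one-line area computation. Because $L$ scales every planar area by the constant factor $D := |\det L|$, which is independent of which 3-periodic we pick, we have $\hat\Sigma_o = D\,\Sigma_o$ and $\hat\Delta_o = D\,\Delta_o$, so $\Sigma_o/\Delta_o = \hat\Sigma_o/\hat\Delta_o$. By Lemma~\ref{lem:incircle} applied to the incircle family $\hat\F$, this ratio equals $1 + 4/\hat\rho$, a quantity depending only on $\hat\E$ and $\hat\E'$ and therefore constant along the family. Since $\Delta_o$ is the area of $\E$ and is already manifestly constant, $\Sigma_o$ itself is invariant over $\F$.

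The main technical point I would want to spell out carefully in the write-up is the naturality of the ``anticevian--circumellipse'' construction under affine maps --- in particular the identification $L(\C_i') = \hat\C_i'$ with the correct center. Everything else is either the standard affine-invariance of Poncelet porism or elementary area scaling; the trilinear-coordinate machinery of Section~\ref{sec:anticevian} is invoked only indirectly, through Lemma~\ref{lem:incircle}.
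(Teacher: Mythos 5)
Your proposal is correct and follows essentially the same route as the paper: the paper's proof also reduces to Lemma~\ref{lem:incircle} via an affine map relating $(\E,\E')$ to an incircle pair, using the affine equivariance of the anticevian/circumellipse construction and the fact that all areas scale by the same determinant factor. Your write-up is somewhat more explicit about the naturality of the anticevian and circumellipse constructions under the affine map, but the underlying argument is identical.
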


\begin{proof}
$(\E,\E')$ can be regarded as an affine image of the original family $\F$ (with incircle). Let matrix $A$ represent the required affine transform. Since conics are equivariant under affine transformations\footnote{In fact, they are equivariant under projective transformations \cite{akopyan2007-conics}.} of the 5 constraints that define them (in our case, passing through the vertices of the 3-periodic and being centered on an anticevian vertex) \cite{akopyan2007-conics}, the area of each circumellipse will scale by $\det(A)$, so by Lemma~\ref{lem:incircle} the result follows.


\end{proof}

\begin{figure}
    \centering
    \includegraphics[width=.7\textwidth]{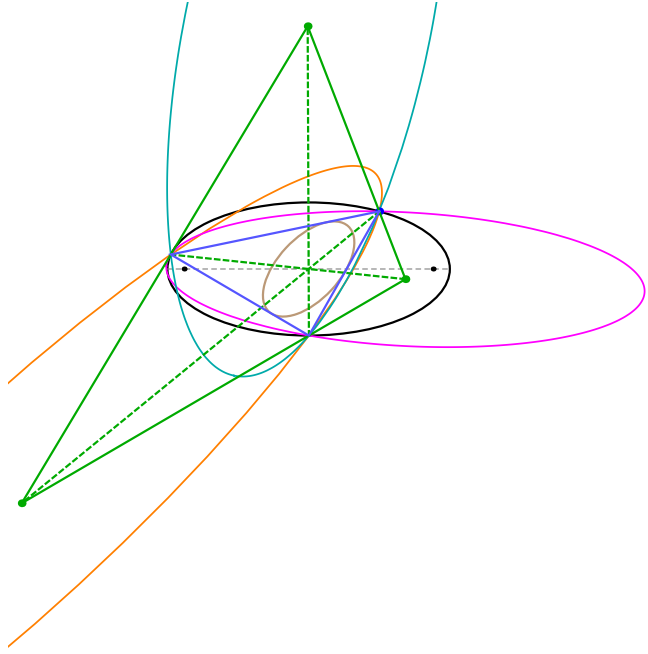}
    \caption{Consider Poncelet 3-periodics (blue) interscribed in a concentric, pair of ellipses which in general is not axis-aligned. The total area of the three circumellipses (orange, light blue, pink) centered on the vertices of the $O$-anticevian triangle (green) is invariant. \href{https://youtu.be/FJXMpUcslaA}{Video}}
    \label{fig:non-axis-aligned}
\end{figure}

In Figure~\ref{fig:anticevians}, a few other concentric, axis-aligned pairs are shown which illustrate the above corollary.

\begin{observation}
If $(\E,\E')$ are homothetic and concentric, then each of the $\Delta_i$ are constant.
\end{observation}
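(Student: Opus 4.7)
The plan is to reduce the statement to the concentric-circle case by exactly the same affine-pullback argument used in Theorem~\ref{thm:main}, and then exploit the rigidity of the Poncelet family for a pair of concentric circles.

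First, since $\E'$ is a homothet of $\E$, say $\E'=k\E$ with $0<k<1$, the affine map $A$ that sends $\E$ to the unit circle sends $\E'$ to a concentric circle of radius $k$. Cayley's $3$-periodic condition reduces here to $k+k=1$, so $k=1/2$. The image circle pair thus has circumradius $R=1$ and inradius $r=1/2$, i.e.\ $R=2r$. Combined with Euler's inequality $R\geq 2r$ (equality iff equilateral), this forces every triangle in the image Poncelet family to be equilateral. The image family is therefore a one-parameter orbit of rigid rotations of a single equilateral triangle about the common center.

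Next, one exploits the $120^\circ$ rotational symmetry of the equilateral triangle. Its anticevian triangle with respect to the common center is its excentral triangle, also equilateral, and the three circumellipses centered on those three vertices are mutually congruent --- they are permuted cyclically by the $120^\circ$ rotation. As the base equilateral rotates through the image family, each of these three circumellipses rotates rigidly with it, so each of their areas is constant throughout the image family (and in fact all three coincide).

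Finally, pull the picture back by $A^{-1}$. Since $A$ is affine, areas scale by the fixed factor $|\det A|$ between the two planes, so each $\Delta_i$ in the original plane is $|\det A|^{-1}$ times the corresponding constant image-plane area, hence itself constant. The only point deserving a moment of care is that the continuous labeling of the three anticevian vertices on the ellipse side corresponds, via the homeomorphism $A$, to a continuous labeling on the circle side, so that each labeled $\Delta_i$ pulls back to one specific labeled circumellipse in the image. I do not anticipate any substantive obstacle: the observation is essentially a corollary of the reduction already carried out for Theorem~\ref{thm:main}, combined with the extra rigidity afforded by homothety (equilaterality of every triangle in the image family).
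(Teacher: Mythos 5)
Your proposal is correct and follows the same route as the paper, which justifies the observation in two sentences: the homothetic pair is affinely related to a pair of concentric circles, whose Poncelet 3-periodics are rotating equilateral triangles with $\Delta_1=\Delta_2=\Delta_3$ constant. You merely supply the details the paper leaves implicit (Cayley forcing the homothety ratio $1/2$, Euler's relation forcing equilaterality, and the area scaling under the affine pullback), all of which check out.
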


This arises from the fact that this pair is affinely-related to a pair of concentric circles. The associated Poncelet 3-periodic is then given by an equilateral triangle where $\Delta_1=\Delta_2=\Delta_3$.

\begin{figure}
    \centering
    \includegraphics[width=\textwidth]{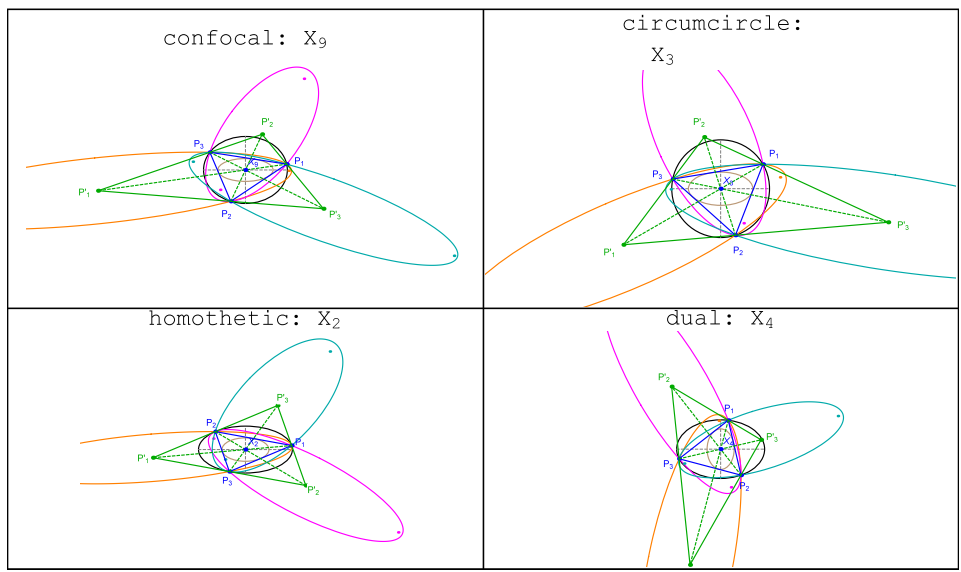}
    \caption{A picture of four 3-periodic families. The circumellipses are centered on vertices of the anticevian with respect to the center. Since each case is affinely related to $\F$, the total circumellipse area is invariant. For the homothetic pair (bottom left), the area of {\em each} circumellipse is invariant. \href{https://youtu.be/crXxPJ93ZDk}{Video}}
    \label{fig:anticevians}
\end{figure}

\section{Circumellipses Meet Excircles}
\label{sec:excircles}
Referring to Figure~\ref{fig:excircles}, L. Gheorghe detexted experimentally that the sum of area ratios of excircles to excentral circumellipses is invariant for two Poncelet families (see below) \cite{gheorghe2020-excircles}. Below we prove this and derive explicit values for the invariants. As before, let $\Delta_i$ denote the area of a circumellipse centered on the ith excenter. Let $\Omega_i$ denote the area of the ith excircle.

\begin{figure}
    \centering
    \includegraphics[width=\textwidth]{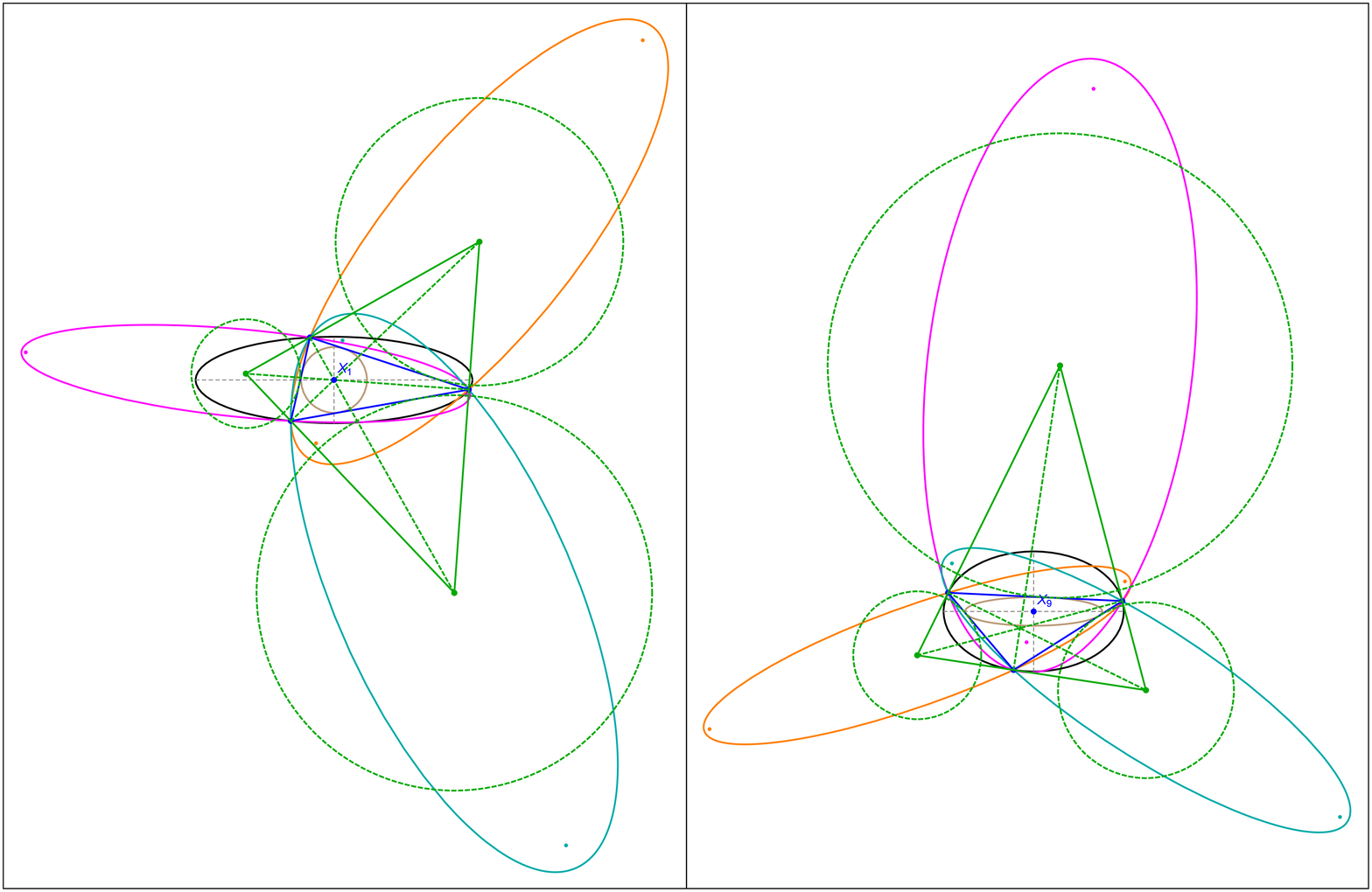}
    \caption{\textbf{Left}: The pair with incircle centered on $X_1$. Shown also are a 3-periodic (blue), the excentral triangle (solid green), the excircles (dashed green), and the three excentral circumellipses (magenta, light blue, and orange). \textbf{Right}: the same arrangement for the confocal pair, centered on $X_9$.}
    \label{fig:excircles}
\end{figure}

\begin{theorem}
Over the 3-periodic family $\F$ (with incircle):

\begin{equation*}
    \frac{\Delta_1}{\Omega_1} + \frac{\Delta_2}{\Omega_2} + \frac{\Delta_3}{\Omega_3} = \frac{2}{\rho}
\end{equation*}
\label{thm:excircles-incircle}
\end{theorem}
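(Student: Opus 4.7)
The plan is to compute each ratio $\Delta_i/\Omega_i$ explicitly in terms of sidelengths and combine with the standard excircle formula, then invoke the invariance of $\rho=r/R$ already established for $\F$ to conclude.

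First I would specialize Lemma~1 to $X=X_1=O$, whose trilinears are $[1:1:1]$. A quick check shows that with $u=v=w=1$ the radical $z$ collapses to $\pi$ (numerator and denominator under the square root coincide), so the excentral circumellipse areas reduce to
\[
\Delta_i \;=\; \frac{\pi\, s_1 s_2 s_3}{-s_i+s_j+s_k} \;=\; \frac{\pi\, s_1 s_2 s_3}{2(s-s_i)},
\]
where $s$ is the semiperimeter of the 3-periodic and $(i,j,k)$ is a permutation of $(1,2,3)$. The excircle areas are $\Omega_i=\pi r_i^2$ with $r_i=\Delta_\T/(s-s_i)$, where $\Delta_\T=rs$ is the triangle area. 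Dividing gives the clean expression
\[
\frac{\Delta_i}{\Omega_i} \;=\; \frac{s_1 s_2 s_3\,(s-s_i)}{2\,\Delta_\T^{\,2}} \;=\; \frac{s_1 s_2 s_3\,(s-s_i)}{2\,r^2 s^2}.
\]

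Next I would sum over $i$, using the telescoping identity $\sum_i (s-s_i) = 3s-2s = s$, which yields
\[
\sum_{i=1}^{3} \frac{\Delta_i}{\Omega_i} \;=\; \frac{s_1 s_2 s_3}{2\,r^2 s}.
\]
To finish, I substitute the classical identity $s_1 s_2 s_3 = 4Rrs$ (equivalently $\Delta_\T=rs=s_1s_2s_3/(4R)$), obtaining
\[
\sum_{i=1}^{3} \frac{\Delta_i}{\Omega_i} \;=\; \frac{4Rrs}{2\,r^2 s} \;=\; \frac{2R}{r} \;=\; \frac{2}{\rho}.
\]

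Since both $r$ and $R$ are invariant over $\F$ (the former by construction of the incircle pair, the latter by the result of Garcia--Reznik--Stanchi cited in Section~\ref{sec:incircle}), $\rho$ is invariant and the right-hand side is constant. There is no real obstacle here: the potentially tricky step is verifying that the square-root factor $z$ indeed reduces to $\pi$ at $[1:1:1]$, but that is immediate by inspection of the formula in Lemma~1. Everything else is standard triangle geometry together with the invariance of $\rho$ already proved in Section~\ref{sec:incircle}.
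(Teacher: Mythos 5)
Your proof is correct and takes essentially the same route as the paper: both reduce $\Delta_i/\Omega_i$ to a constant multiple of $s-s_i$ (your $s_1s_2s_3/(2r^2s^2)$ equals the paper's $\mu$ via Heron's formula) and then sum using $\sum_i (s-s_i)=s$. You are merely more explicit in deriving $\Delta_i=\pi s_1 s_2 s_3/(2(s-s_i))$ from Lemma~1 at $[1:1:1]$ and in finishing with the identity $s_1s_2s_3=4Rrs$, where the paper instead substitutes its sidelength expression for $\rho$ directly.
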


\begin{proof}
The exradii are given by \cite[Excircles]{mw}:

\[ r_1  = \frac{S}{s - s_1},\;\;r_2 = \frac{S}{s - s_2},\;\;r_3 = \frac{S}{s - s_3}
\]

\noindent where $s = \frac{s_1 + s_2 + s_3}{2}$ and $S$ are semi-perimeter and area of triangle $P_1 P_2 P_3$, respectively. From them obtain the excircle areas $\Omega_i$:

\begin{align*}
    \Omega_1 & = \pi \frac{s(s - s_2)(s - s_3)}{s - s_1}\\
    \Omega_2 & = \pi \frac{s(s - s_3)(s - s_1)}{s - s_2}\\
    \Omega_3 & = \pi \frac{s(s - s_1)(s - s_2)}{s - s_3}
\end{align*}

\noindent For $i=1,2,3$, the following can be derived:

\[ \frac{\Delta_i}{\Omega_i} = \mu (s - s_i) \]

\noindent with $\mu = \frac{s_1 s_2 s_3}{2s(s - s_1)(s - s_2)(s - s_3)}$. Therefore: 

\[ \sum_{i=1}^{3}{\frac{\Delta_i}{\Omega_i}}  = \mu s = \frac{2}{\rho} \]

\noindent Recall $\rho$ is constant for family $\F$ so the result for the pair incircle follows.
\end{proof}




Using an analogous proof method:

\begin{theorem}
Over the 3-periodic family interscribed in the confocal the following quantity is also invariant:

\begin{equation*}
    \frac{\Delta_1}{\Omega_1} + \frac{\Delta_2}{\Omega_2} + \frac{\Delta_3}{\Omega_3} =\frac{2}{\rho}
\end{equation*}
\label{thm:excircles-confocal}
\end{theorem}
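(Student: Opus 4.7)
The plan is to re-run the argument of Theorem~\ref{thm:excircles-incircle} verbatim and then replace its final appeal to the invariance of $\rho$ by the corresponding (known) invariance on the confocal family. The key realization is that the identity $\sum_{i=1}^3 \Delta_i/\Omega_i = \mu s$ established in that proof is a purely triangle-level identity: it uses Lemma~1 specialized to trilinears $[u{:}v{:}w]=[1{:}1{:}1]$ (i.e.\ circumellipses centered on the excenters) together with the classical excircle-area formulas, both of which are sidelength-only expressions and make no reference to the surrounding Poncelet pair.

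Concretely, I would first copy the computation $\Delta_i/\Omega_i = \mu (s-s_i)$ from the previous proof, giving
\[ \sum_{i=1}^3 \frac{\Delta_i}{\Omega_i} \;=\; \mu s \;=\; \frac{s_1 s_2 s_3}{2(s-s_1)(s-s_2)(s-s_3)}. \]
Using $S=rs$, Heron's identity $(s-s_1)(s-s_2)(s-s_3)=r^2 s$, and $s_1 s_2 s_3 = 4RS$, I would rewrite the right-hand side as $2R/r = 2/\rho$, exactly as in the incircle case. Thus the theorem reduces to the assertion that $\rho = r/R$ is invariant over the confocal 3-periodic family. This last fact is classical for the elliptic billiard: the sum $\cos A + \cos B + \cos C$ is a well-documented invariant, and by Chapple/Euler it equals $1+r/R$, so $\rho$ is invariant (cf.\ \cite{reznik2020-invariants,garcia2020-relating}).

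The main obstacle, modest as it is, is a bookkeeping check rather than a computational one: I need to confirm that the ``excentral circumellipses'' appearing in Theorem~\ref{thm:excircles-confocal} are genuinely the ones of Lemma~1 at trilinears $[-1{:}1{:}1]$, $[1{:}{-}1{:}1]$, $[1{:}1{:}{-}1]$, i.e.\ centered on the true excenters and \emph{not} on the vertices of the $X_9$-anticevian (which are the natural ``propeller'' centers for the confocal pair per Theorem~\ref{thm:main}). Since Figure~\ref{fig:excircles} shows circumellipses centered on the excentral-triangle vertices, the same sidelength formula for $\Delta_i$ applies, and the proof closes as above.
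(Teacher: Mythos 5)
Your proposal is correct and follows essentially the same route as the paper, which proves this theorem simply by declaring the argument ``analogous'' to Theorem~\ref{thm:excircles-incircle}: the identity $\sum_i \Delta_i/\Omega_i = \mu s = 2R/r$ is sidelength-only, and the remaining input is the classical invariance of $r/R$ (equivalently of $\sum\cos$) over the confocal family. Your explicit verification of the algebra and of the fact that the $\Delta_i$ are centered on the true excenters actually supplies detail the paper omits.
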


\begin{observation}
Although the pair with incircle and the confocal pair are affinely-related, neither Theorem~\ref{thm:excircles-incircle} nor \ref{thm:excircles-confocal} for any other ellipse pair in the affine continuum.
\end{observation}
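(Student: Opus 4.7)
The plan is to exploit the asymmetry between circumellipses and excircles under affine maps. Circumellipses are affine-equivariant (as used in the proof of Theorem~\ref{thm:main}), so $\sum_i \Delta_i$ is automatically constant on any pair in the affine continuum of $\F$. Excircles, however, are defined by tangency in the \emph{Euclidean} metric, so the excircles of an image triangle $A\T$ are not the $A$-images of the excircles of $\T$, and $\sum_i \Delta_i/\Omega_i$ has no a priori reason to be conserved. The plan is to reduce constancy of this sum over the Poncelet family to a single polynomial condition on the ellipse-pair parameters and show that this condition carves out exactly the incircle and confocal loci.

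First I would fix a convenient slice of the affine continuum. Rotation and uniform scaling do not affect whether $\sum_i \Delta_i/\Omega_i$ is constant along the family, so every equivalence class in the continuum has an axis-aligned representative with outer semi-axes $(a,b)$ and caustic semi-axes $(a_c,b_c)$ subject to Cayley's 3-periodic condition $a_c/a+b_c/b=1$. Inside this 3-dimensional Cayley locus, the incircle pair is the 2-dimensional slice $a_c=b_c$ and the confocal pair is the 2-dimensional slice $a^2-a_c^2=b^2-b_c^2$; every other axis-aligned representative of the continuum lies off both.

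Second I would evaluate $F := \sum_i \Delta_i/\Omega_i$ at two tractable snapshots of the 3-periodic family: the isoceles triangle whose axis of symmetry is the major axis of $\E$, and the isoceles triangle whose axis of symmetry is the minor axis. In each configuration the vertices, side lengths, and hence excircle areas (via the exradius formulas used in Theorem~\ref{thm:excircles-incircle}) are expressible in closed form in $(a,b,a_c,b_c)$; the individual $\Delta_i$ come from the circumellipse formulas of Section~\ref{sec:anticevian}, scaled by $\det(A)$ as in the proof of Theorem~\ref{thm:main}. Equating $F$ at the two snapshots gives a necessary algebraic condition $\Phi(a,b,a_c,b_c)=0$ for constancy of $F$ along the family.

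Third I would verify by direct substitution that $\Phi$ vanishes identically on each of the two special loci, then factor $\Phi$ on the Cayley locus and check that these are its only components. The main obstacle is precisely this factorization: the closed-form chord lengths at the symmetric snapshots carry square roots, so one must either clear them by squaring (tracking spurious branches) or outsource the elimination to a CAS. Once $\Phi$ is so factored, the observation follows, since any axis-aligned pair off the two loci already fails this necessary two-snapshot condition, hence fails constancy of $F$ along its full Poncelet family.
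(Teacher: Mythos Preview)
The paper does not prove this Observation at all; it is asserted without argument, presumably on the basis of numerical experiment. So there is no proof in the paper to compare your proposal against.

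Your plan is broadly sensible but carries a conceptual slip and misses a large simplification already sitting in the paper. The opening claim that ``$\sum_i\Delta_i$ is automatically constant on any pair in the affine continuum of $\F$'' is true only for the circumellipses centered on the $O$-anticevian vertices (that is the content of Theorem~\ref{thm:main}); in Section~\ref{sec:excircles} the $\Delta_i$ are the circumellipses centered on the \emph{excenters}, which are Euclidean constructions and therefore not affine-equivariant. For a generic pair in the continuum the excenters do not coincide with the $O$-anticevian vertices, so the ``scaled by $\det(A)$'' step you invoke does not apply to these particular $\Delta_i$. You would have to compute them directly from Lemma~1 with $[u:v:w]=[1:1:1]$, using the actual sidelengths of the 3-periodic in the given pair.

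More to the point, the derivation inside the proof of Theorem~\ref{thm:excircles-incircle} shows that for \emph{every} triangle one has $\sum_i \Delta_i/\Omega_i = 2/\rho$ with $\rho=r/R$; nothing there uses the Poncelet family. Hence constancy of your $F$ over a Poncelet family is equivalent to constancy of $r/R$ over that family. This collapses the problem to a single elementary quantity in the sidelengths and makes your two-snapshot comparison far more tractable: compute $r/R$ at the two axis-symmetric isoceles configurations and show that, on the Cayley locus $a_c/a+b_c/b=1$, equality of those two values factors into exactly $a_c=b_c$ (incircle) and $a^2-a_c^2=b^2-b_c^2$ (confocal). The factorization is still the genuine work, but with $r/R$ in place of the full $F$ it is an order of magnitude cleaner than what you sketch.
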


\section{List of Videos}
\label{sec:videos}
Animations illustrating some of the above phenomena are listed on Table~\ref{tab:playlist}.

\begin{table}[H]
\small
\begin{tabular}{|c|l|l|}
\hline
id & Title & \textbf{youtu.be/<.>}\\
\hline
01 & {3-Periodic incircle family has invariant $R$} &
\href{https://youtu.be/eIxb1so6ORo}{\texttt{eIxb1so6ORo}} \\
02 & {Family of incircle 3-periodics and one excentral circumellipse} &
\href{https://youtu.be/JUCmAMsfdkI}{\texttt{JUCmAMsfdkI}} \\
03 & {Three excentral circumellipses with invariant total area} &
\href{https://youtu.be/ub4wAv8Hgb0}{\texttt{ub4wAv8Hgb0}} \\
04 & {3-periodics in 5 concentric, axis-aligned ellipse pairs} &
\href{https://youtu.be/8hkeksAsx0E}{\texttt{8hkeksAsx0E}} \\
05 & {Incircle family} &
\href{https://youtu.be/tHUDfx9o0Wg}{\texttt{tHUDfx9o0Wg}} \\
06 & {Four Poncelet families} &
\href{https://youtu.be/crXxPJ93ZDk}{\texttt{crXxPJ93ZDk}} \\
07 & {Non-concentric ellipse pair} & \href{https://youtu.be/FJXMpUcslaA}{\texttt{FJXMpUcslaA}} \\
\hline
\end{tabular}
\caption{Videos of some focus-inversive phenomena. The last column is clickable and provides the YouTube code.}
\label{tab:playlist}
\end{table}

\noindent We would like to thank Liliana Gheorghe, Ronaldo Garcia, and Arseniy Akopyan for valuable discussions and contributions. 

\appendix

\section{Table of Symbols}
\label{app:symbols}
\begin{table}[H]
\begin{tabular}{|c|l|}
\hline
symbol & meaning \\
\hline

$\E,\E_c$ & outer and inner ellipses \\
$a,b$ & outer ellipse semi-axes' lengths \\
$a_c,b_c$ & inner ellipse semi-axes' lengths \\
$O$ & common center of ellipses \\
$P_1,P_2,P_3$ & 3-periodic vertices \\
$s_1,s_2,s_3$ & 3-periodic sidelengths \\
$P_1',P_2',P_3'$ & vertices of the anticevian wrt $O$ \\
$\C_X,\Delta_X$ & $X$-centered circumellipse and its area \\
$\C_i',\Delta_i$ & $P_i'$-centered circumellipse and its area \\
$\T_X$ & anticevian wrt $X$ \\
$\Sigma_{X}$ & \makecell[lt]{area sum of 3 circumellipses centered\\ on the vertices of $\T_X$} \\
$r,R$ & 3-periodic inradius and circumradius \\
$\rho$ & ratio $r/R$ \\
$[u:v:w]$ & trilinear coordinates \\
$\Omega_i$ & area of the excircle corresponding to $P_i$ \\
\hline
\end{tabular}
\caption{Symbols of euclidean geometry used}
\label{tab:symbols}
\end{table}

\bibliographystyle{maa}
\bibliography{references,authors_rgk}

\end{document}